\newtheorem{thm}{Theorem}
\newtheorem{lem}[thm]{Lemma}
\newtheorem{cor}[thm]{Corollary}
\newtheorem{prop}[thm]{Proposition}
\theoremstyle{definition}
\newtheorem{defn}[thm]{Definition}
\newtheorem{conj}[thm]{Conjecture}
\newtheorem{rmk}[thm]{Remark}
\newcommand{\Cg}{\overline{\mathcal{C}}}
\newcommand{\Me}{\overline{\mathcal{M}}_{1,1}}
\newcommand{\CMe}{\overline{M}_{1,1}}
\newcommand{\pf}{\mathfrak{p}}
\newcommand{\Bc}{\mathcal{B}}
\newcommand{\Oc}{\mathcal{O}}
\newcommand{\Pc}{\mathcal{P}}
\newcommand{\Lc}{\mathcal{L}}
\newcommand{\Nc}{\mathcal{N}}
\newcommand{\Zc}{\mathcal{Z}}
\newcommand{\R}{\mathbb{R}}
\newcommand{\Q}{\mathbb{Q}}
\newcommand{\N}{\mathbb{N}}
\newcommand{\Pb}{\mathbb{P}}
\newcommand{\Ab}{\mathbb{A}}
\newcommand{\A}{\mathbb{A}}
\newcommand{\Fb}{\mathbb{F}}
\newcommand{\Gb}{\mathbb{G}}
\newcommand{\Lb}{\mathbb{L}}
\newcommand{\Qb}{\mathbb{Q}}
\newcommand{\Rb}{\mathbb{R}}
\newcommand{\Zb}{\mathbb{Z}}
\newcommand{\Spec}{Spec}
\DeclareMathOperator{\Hom}{Hom}
\begin{document}
    
    \title[Arithmetic of the moduli of semistable elliptic surfaces]
    {Arithmetic of the moduli of \\ semistable elliptic surfaces}
    
    \author{Changho Han and Jun--Yong Park}
    
    \address{Department of Mathematics, Harvard University, Cambridge, MA 02138}
    \email{chhan@math.harvard.edu}
    
    \address{Center for Geometry and Physics, Institute for Basic Science (IBS), Pohang 37673, Korea}
    \email{junepark@ibs.re.kr}

    \begin{abstract}
        
       We prove a new sharp asymptotic with the lower order term of zeroth order on $\mathcal{Z}_{\mathbb{F}_q(t)}(\mathcal{B})$ for counting the semistable elliptic curves over $\mathbb{F}_q(t)$ by the bounded height of discriminant $\Delta(X)$. The precise count is acquired by considering the moduli of nonsingular semistable elliptic fibrations over $\mathbb{P}^{1}$, also known as semistable elliptic surfaces, with $12n$ nodal singular fibers and a distinguished section. We establish a bijection of $K$-points between the moduli functor of semistable elliptic surfaces and the stack of morphisms $\mathcal{L}_{1,12n} \cong \mathrm{Hom}_n(\mathbb{P}^{1}, \overline{\mathcal{M}}_{1,1})$ where $\overline{\mathcal{M}}_{1,1}$ is the Deligne--Mumford stack of stable elliptic curves and $K$ is any field of characteristic $\neq 2,3$. For $\mathrm{char}(K)=0$, we show that the class of $\mathrm{Hom}_n(\mathbb{P}^1,\mathcal{P}(a,b))$ in the Grothendieck ring of $K$--stacks, where $\mathcal{P}(a,b)$ is a 1-dimensional $(a,b)$ weighted projective stack, is equal to $\mathbb{L}^{(a+b)n+1}-\mathbb{L}^{(a+b)n-1}$. Consequently, we find that the motive of the moduli $\mathcal{L}_{1,12n}$ is $\mathbb{L}^{10n + 1}-\mathbb{L}^{10n - 1}$ and the cardinality of the set of weighted $\mathbb{F}_q$--points to be $\#_q(\mathcal{L}_{1,12n}) =  q^{10n + 1}-q^{10n - 1}$. In the end, we formulate an analogous heuristic on $\mathcal{Z}_{\mathbb{Q}}(\mathcal{B})$ for counting the semistable elliptic curves over $\mathbb{Q}$ by the bounded height of discriminant $\Delta$ through the global fields analogy.
        
    \end{abstract}
    
    \maketitle

    \section{Introduction}\label{sec:intro}

    \par An algebraic surface $X$ is an \textit{elliptic fibration}, if it admits a proper flat morphism $f : X \to \Pb^1$ such that a general fiber is a smooth curve of genus one. $X$ is called an \emph{elliptic surface} in other literatures. It is natural to work with the case when there exists a distinguished section $s: \Pb^1 \hookrightarrow X$ coming from the identity points on each of the elliptic fibers. We restrict our attention to \emph{semistable} elliptic fibrations where all fibers are nodal.
    
    \medskip

    \par Our primary goal of the paper is to enumerate the $\Fb_q$--points of the moduli of nonsingular semistable elliptic surfaces with discriminant degree $12n$. Points on stacks are counted with weights, where a point with its stabilizer group $G$ (e.g. automorphism group of a semistable elliptic surface) contributes a weight $\frac{1}{|G|}$. We acquire the weighted $\Fb_q$--point counts by considering the moduli stack $\Lc_{1, 12n}$ of stable elliptic fibrations over $\Pb^{1}$ with $12n$ nodal singular fibers and a distinguished section. This is justified by showing the equivalence of $K$--points between the two moduli stacks where $K$ is any field of characteristic neither 2 nor 3 (see Proposition~\ref{Semistable_Bijection} for the precise statement and the proof). Regarding $\Me$ as the moduli stack of stable elliptic curves, we show that $\Lc_{1, 12n} \cong \mathrm{Hom}_{n}(\Pb^{1}, \Me)$ a Deligne--Mumford stack parameterizing morphisms from $\Pb^1$ to $\Me$.
    
    \medskip
    
    \par In order to acquire the weighted count of $\Fb_q$--points of the moduli stack $\mathrm{Hom}_{n}(\Pb^{1}, \Me)$, we consider the more general case of $\Hom_n(\Pb^1,\Pc(a,b))$ (see Definition~\ref{def:wtproj}). We provide the explicit stratification of $\Hom_n(\Pb^1,\Pc(a,b))$. In characteristic 0, this allows us to obtain $[\mathrm{Hom}_{n}(\Pb^{1}, \Pc(a,b))]$, a class in the Grothendieck ring of $K$--stacks with $\mathrm{char} (K)$ not dividing $a$ or $b$, expressed as a polynomial of the Lefschetz motive $\Lb:=[\A^1]$. Similarly, we can count $\mathbb{F}_q$-points of $\mathrm{Hom}_{n}(\Pb^{1}, \Pc(a,b))$ up to weights, acquiring the weighted point count $\#_q(\Hom_n(\Pb^1,\Pc(a,b)))=q^{(a+b)n+1}-q^{(a+b)n-1}$ .
    
    \medskip
    
    \begin{thm} [Motive of the moduli stack $\Hom_n(\Pb^1,\Pc(a,b))$]
        \label{thm:motivecount}
        If $\mathrm{char}(K)$ is 0, then the class $[\Hom_n(\Pb^1,\Pc(a,b))]$ in $K_0(\mathrm{Stck}_{K})$ is equivalent to
        \[[\Hom_n(\Pb^1,\Pc(a,b))] = \Lb^{(a+b)n+1}-\Lb^{(a+b)n-1}\;.\]
        On the other hand, when $\mathrm{char}(\mathbb{F}_q)$ does not divide $a$ or $b$, then
        \[\#_q(\Hom_n(\Pb^1,\Pc(a,b)))=q^{(a+b)n+1}-q^{(a+b)n-1}\;.\]
    \end{thm}
    
    \medskip
    
    \par Then, by recognizing $\Me \cong \Pc(4,6)$ over any field $K$ of characteristic $\neq 2,3$ , we conclude the following on the moduli stack of nonsingular semistable elliptic surfaces:
    
    \medskip
    
    \begin{cor} [Motive and weighted point count of the moduli stack $\Lc_{1, 12n}$]
        \label{cor:pointcount}
        If $\text{char}(K) =0$, then
        \[ [\Lc_{1, 12n}]=\Lb^{10n+1}-\Lb^{10n-1}\;.\]
        If $\mathrm{char}(\mathbb{F}_q) \neq 2,3$, \[\#_q(\Lc_{1, 12n})=q^{10n+1}-q^{10n-1}\;.\]
    \end{cor}
    
    \medskip
    
    \par This implies that the number of isomorphism classes of $\Fb_q$-points of $\Lc_{1, 12n}$ is $|\Lc_{1, 12n}(\Fb_q)| = 2 \cdot (q^{10n+1}-q^{10n-1})$ (see Remark~\ref{rmk:ptcount}). Since a semistable elliptic surface $f:X \rightarrow \Pb^1_{\Fb_q}$ is a semistable elliptic curve over $\Pb^1_{\Fb_q}$, we acquire the following count by bounding the \emph{height} of discriminant $\Delta(X)$ when $q$ is not divisible by 6:
    
    \medskip
    
    \begin{thm} [Computation of $\Zc_{\Fb_q(t)}(\Bc)$] 
        The counting of semistable elliptic curves over $\Pb^1_{\Fb_q}$ by $ht(\Delta(X)) = q^{12n} \le \Bc$ satisfies the following inequality:
        \[ \Zc_{\Fb_q(t)}(\Bc) \le 2 \cdot \frac{(q^{11} - q^{9})}{(q^{10}-1)} \cdot \left( \Bc^{\frac{5}{6}} - 1\right)\]
        which is an equality when $\Bc = q^{12n}$ for some $n \in \N$ implying that the acquired upper bound is a sharp asymptotic with the lower order term of zeroth order (i.e. constant).
    \end{thm}
    
    \medskip
    
    \par While the leading term of order $\Oc\left(\Bc^{\frac{5}{6}}\right)$ was expected by the work of \cite{BM}, the sharpness of the upper bound as well as the lower order term of zeroth order over $\Pb^1_{\Fb_q}$ is remarkable as it contrasts the known counting of the stable elliptic curves with squarefree $\Delta$ over $\Qb$ by the work of \cite{Baier} where the error term has the order of $\Oc\left(\Bc^{(7-\frac{5}{27}+\epsilon)/12}\right)$.

    \medskip
    
    \par Lastly, we consider the \textit{global fields analogy}, which says that (global) function fields $\Fb_q(t)$ and algebraic number field $\Q$ are expected to share many properties (see Section~\ref{sec:globfield}). Thus, we formulate the following conjecture by passing the above sharp asymptotic through the global fields analogy:
    
    \medskip
    
    \begin{conj} [Heuristic on $\Zc_{\Qb}(\Bc)$]
        The counting $\Zc_{\Qb}(\Bc)$ of semistable elliptic curves over $\Zb$ by $ht(\Delta) \le \Bc$ follows from the sharp asymptotic counting on $\Zc_{\Fb_q(t)}(\Bc)$ through the global fields analogy. Namely, $\Zc_{\Qb}(\Bc)$ has the leading term of order $\Oc\left(\Bc^{\frac{5}{6}}\right)$ and the lower order term of zeroth order (i.e. constant).
    \end{conj}
    
    \medskip
    
    \par Our project could be considered as an extension of the beautiful work done in \cite{EVW} by Jordan S. Ellenberg, Akshay Venkatesh and Craig Westerland. They proved in loc.cit. a function field analogue of the Cohen-Lenstra heuristics on distributions of class groups by point counting the \textit{Hurwitz spaces} parametrizing branched covers of the complex projective line. As the branched covers of the $\Pb^{1}$ are the fibrations with $0$-dimensional fibers, the moduli of fibrations $f: X \to \Pb^{1}$ on fibered surfaces $X$ with $1$-dimensional fibers is the next most natural case to work on. The counting technique in our project is driven largely by the inspiring work of Benson Farb and Jesse Wolfson \cite{FW} which in turn was motivated by the ideas in Graeme Segal's classical paper \cite{Segal}. 
    
    \medskip
    
    \textbf{Acknowledgments.} Jun-Yong Park would like to express sincere gratitude to his doctoral advisor Craig Westerland for his guidance. We would like to thank Leo Herr and Jonathan Wise for their interest and a lynch pin idea of looking at the weighted projective embeddings. We would also like to thank Denis Auroux, Kenneth Ascher, Dori Bejleri, Jordan S. Ellenberg, Joe Harris, Minhyong Kim, Barry Mazur, Bjorn Poonen, Andr\'as Stipsicz, and Jesse Wolfson for helpful conversations and inspirations. Finally, we thank the referee for in depth review and significant insights throughout the revision guidelines. Jun-Yong Park was supported by IBS-R003-D1, Institute for Basic Science in Korea. Changho Han acknowledge the support of the Natural Sciences and Engineering Research Council of Canada (NSERC), [PGSD3-487436-2016].
    
    \section{Semistable elliptic fibrations over $\Pb^{1}$}\label{sec:fib}
    
    \par In this section, we define the semistable elliptic fibrations over $\Pb^{1}$. For detailed references on elliptic curves and surfaces, we refer the reader to \cite{Silverman, Miranda} respectively.
    
    \smallskip
    
    \par Let us first define the semistable fibrations. Let $X$ be an algebraic surface and $f: X \to \Pb^1$ be a fibration over the projective line with $g > 0$, where $g$ is the genus of a $X_t$ for a general geometric point $t$ of $\Pb^1$. Recall from \S \ref{sec:intro} that a fibration $f$ is equipped with a distinguished section $s:\Pb^1 \hookrightarrow X$.
    
    \begin{defn} 
        A fiber $X_t$ is \textit{semistable}, if it has the following properties:
        \begin{enumerate}
            \item $X_t$ is reduced,
            
            \item The only singularities of $X_t$ are nodes, 
            
            \item $s(t)$ is in the nonsingular locus of $X_t$,
            
            \item $X_t$ contains no $(-1)$-curves of $X$.
            
        \end{enumerate}
        $X_t$ is stable, if in addition $\omega_{X_t}(s(t))$ is ample. The fibration $f$ is (semi)stable, if all the geometric fibers $X_t$ are (semi)stable respectively.
        
    \end{defn}
    
    \par By the semistable reduction theorem \cite[Theorem 8.2]{dJ}, one can always reduce the study of general fibrations to the study of semistable fibrations which are much easier to handle. If $X$ is also nonsingular, then stable fibration can be obtained from $f$ by contracting all $(-2)$-curves (see proof of Proposition~\ref{Semistable_Bijection}). The image of each $(-2)$-curve becomes a singular point on this surface where each singular fiber has only one node.
    
    \medskip
    
    \par In this paper, we work with nonsingular semistable elliptic fibrations where the fiber genus is 1. The only semistable singular fibers with $g(X_t)=1$ are of the type $I_k$ as in \cite[Theorem 6.2]{Kodaira} which are denoted as the type $b_k$ in \cite[Proof of Theorem 1]{Neron}.
    
    \begin{enumerate}
        \item $I_0$ : nonsingular elliptic (generic smooth fiber),
        
        \item $I_1$ : irreducible rational with one node (fishtail singular fiber),
        
        \item $I_{k \ge 2}$ : $k$--cycle of $(-2)$-curves ~ (necklace singular fiber).
        
    \end{enumerate}
    
    \begin{defn} \label{def:ellsurf}
        A nonsingular semistable elliptic surface $X$ is a nonsingular surface equipped with a relatively minimal, semistable elliptic fibration $f : X \to \Pb^1$ that comes with a distinguished section $s: \Pb^1 \hookrightarrow X$ such that the image of $s$ does not intersect nodal singular points of each fiber. We assume that $X$ is not isotrivial, i.e. the trivial elliptic fiber bundles over $\Pb^{1}$ with no singular fibers.
    \end{defn}
    
    \begin{rmk}\label{rmk:disc}
        Any semistable elliptic surface of discriminant degree $12n$ has the $12n$ nodal points distributed over $\mu$ distinct singular fibers of types $I_{k_1},~ \cdots ,~I_{k_i},~ \cdots, I_{k_\mu}$ with $\sum\limits_{i=1}^{\mu} k_i = 12n$. Similarly, any stable elliptic fibration $f:X \rightarrow \Pb^1$ of discriminant degree $12n$ has the $\mu$ distinct singular fibers of type $I_1$ over the $\mu$ distinct points $x_1,~ \cdots ,x_\mu$ on $X$ where each $x_i$ has $A_{k_i-1}$ type singularity such that $\sum\limits_{i=1}^{\mu} k_i = 12n$. Recall that when $\mathrm{char}(K) \neq 2$, an $A_{k}$ surface singularity is \'etale locally defined by \[\overline{K}[x,y,z]/(x^2+y^2+z^{k+1}) \cong \overline{K}[x,y,z]/(xy-z^{k+1}).\] By a convention, $A_0$ means smooth. Moreover, any $A_k$ surface singularity germ $(U,0)$ admits a minimal resolution by \cite{Hironaka} when $\mathrm{char}(K)=0$. When $\mathrm{char}(K)>0$, the minimal resolution exists by \cite{Lipman} as any algebraic surface over a field (which is of finite type by definition) is excellent by \cite[Tag 07QW]{Stacks}. The minimal resolution of $A_k$ singularities can be explicitly computed, which is a sequence of simple blowups
        \[
        U=:U_0 \leftarrow U_1 \leftarrow \dotsb \leftarrow U_k
        \]
        Additionally, the exceptional locus of the minimal resolution $U_k \rightarrow U_0$ consists of the nodal chain of rational curves of length $k$.     
    \end{rmk}
    
    \section{Moduli stack $\Lc_{1, 12n}$ of stable elliptic fibrations over $\Pb^{1}$}
    \label{sec:DM_moduli}
    
    \par In this section, we formulate the moduli stack $\Lc_{1, 12n}$ of stable elliptic fibrations over $\Pb^{1}$ as the Deligne--Mumford stack of morphisms $\mathrm{Hom}_n(\Pb^{1}, \Me)$ and establish the equivalence between the category of semistable elliptic surfaces and that of stable elliptic fibrations over $\Pb^1$. 
    
    \smallskip
    
    \par Let us first recall that a pair $(E,p)$ is a stable elliptic curve if $E$ is a nodal projective curve of arithmetic genus 1 and $p \in E$ is a smooth point. Then, it is well--known by \cite{Knudsen} that $\Me$ is a proper Deligne--Mumford stack of stable elliptic curves with a coarse moduli space $\CMe \cong \Pb^1$ parameterizing the $j$--invariants of elliptic curves. Denote $[\infty] \in \Me$ to be the unique point of $\Me\setminus \mathcal{M}_{1,1}$. Notice that $\Me$ comes equipped with a universal family $p: \Cg_{1,1} \rightarrow \Me$. We consider the following definition for a more concrete description of $\Me$ :
    
    \begin{defn}\label{def:wtproj}
        The 1-dimensional $a,b \in \N$ weighted projective stack is defined as a quotient stack
        \[
        \Pc(a,b):=[(\Ab_{x,y}^2\setminus 0)/\Gb_m]
        \]
        Where $\lambda \in \Gb_m$ acts by $\lambda \cdot (x,y)=(\lambda^a x, \lambda^b y)$. In this case, $x$ and $y$ have degrees $a$ and $b$ respectively. A line bundle $\Oc_{\Pc(a,b)}(m)$ is defined to be a line bundle associated to the sheaf of degree $m$ homogeneous regular functions on $\Ab^2_{x,y}\setminus 0$.
    \end{defn}
    
    \par When the characteristic of the field $K$ is not equal to 2 or 3, \cite{Hassett} shows that $(\Me)_K \cong [ (\Spec~K[a_4,a_6]-(0,0)) / \Gb_m ] = \Pc_K(4,6)$ by using the Weierstrass equations, where $\lambda \cdot a_i=\lambda^i a_i$ for $\lambda \in \Gb_m$ and $i=4,6$. Thus, the $a_i$s have degree $i$ respectively. Note that this is no longer true if characteristic of $K$ is 2 or 3, as the Weierstrass equations are more complicated. Now we can describe the moduli stack of stable elliptic fibrations over $\Pb^1$:
    
    \begin{prop}\label{Lef_Moduli_Space}
        The moduli stack $\Lc_{1, 12n}$ of stable elliptic fibrations over $\Pb^{1}$ with $12n$ nodal singular fibers and a distinguished section is the Deligne--Mumford stack $\mathrm{Hom}_n(\Pb^{1}, \Me)$ parameterizing morphisms $f:\Pb^1 \rightarrow \Me$ such that $f^*\Oc_{\Pc(4,6)}(1) \cong \Oc_{\Pb^1}(n)$.
    \end{prop}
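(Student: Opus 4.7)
The plan is to upgrade the point-level correspondence sketched in the paragraph preceding the proposition to a stack-level isomorphism, and then to check the two numerical conditions (degree $n$ on one side, $12n$ nodal points on the other) match. First I would invoke the universal property of the moduli stack $\Me$ of stable elliptic curves: since $p: \Cg_{1,1} \to \Me$ is the universal family, given any test scheme $T$ and any stable elliptic fibration $f: Y \to \Pb^1 \times T$ equipped with a distinguished section, there is a unique classifying morphism $\varphi_f: \Pb^1 \times T \to \Me$ with $f \cong \varphi_f^{*}p$ together with the induced section. Conversely, for any morphism $\varphi : \Pb^1 \times T \to \Me$, pulling back $p$ and its tautological section yields a stable elliptic fibration. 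These constructions are mutually inverse and natural in $T$, so they upgrade to an isomorphism of stacks between the moduli of stable elliptic fibrations over $\Pb^1$ and the mapping stack $\mathrm{Hom}(\Pb^{1}, \Me)$.

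Next I would match numerical invariants. Using $\Me \cong \Pc(4,6)$ (valid in characteristic $\neq 2,3$), the Picard group $\Pic(\Me) \cong \Zb$ has a canonical generator, and I define $\deg \varphi_f \in \Zb$ as the degree of the pullback of this generator along $\varphi_f$. The coarse moduli map $c:\Me \to \CMe \cong \Pb^1$ has the property that $\mathcal{O}_{\CMe}(1)$ pulls back to the twelfth power of the generator of $\Pic(\Me)$, because $\CMe$ arises as the weight-$12$ $\mathrm{Proj}$ of the graded ring in $a_4, a_6$. Thus $\deg(c \circ \varphi_f) = 12 \deg \varphi_f$. Since the cusp $\infty \in \CMe$ parametrizes nodal (i.e.\ non-smooth stable) elliptic curves, the discriminant divisor $\Delta(f)$ on $\Pb^1$ is precisely $(c \circ \varphi_f)^{-1}(\infty)$ as a Cartier divisor, and its total degree $\sum k_i$ counts nodal points of fibers with multiplicity. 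Requiring $12n$ nodal singular fibers (counted as $\sum k_i = 12n$) is therefore equivalent to $\deg \varphi_f = n$, cutting out exactly the substack $\mathrm{Hom}_n(\Pb^{1}, \Me)$.

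Finally, I would invoke a general representability result for mapping stacks (e.g.\ Olsson's theorem) to conclude that $\mathrm{Hom}(\Pb^{1}, \Me)$ is an algebraic stack locally of finite type, since $\Pb^1$ is projective and flat and $\Me$ is a proper Deligne--Mumford stack of finite type with finite diagonal. The Deligne--Mumford property is inherited from that of $\Me$, and fixing $\deg \varphi = n$ selects an open and closed substack. The main obstacle I expect is the numerical calibration in the middle paragraph: one must be scrupulous that the stacky Picard degree on the mapping-stack side is exactly one twelfth of the classical discriminant degree on the elliptic-surface side. This is where the explicit weighted projective description $\Me \cong \Pc(4,6)$, with the weight-$12$ Proj giving $\CMe$, becomes indispensable (and is also where the hypothesis $\mathrm{char}\, K \neq 2, 3$ enters), so the key is to set up these identifications explicitly before applying the general machinery.
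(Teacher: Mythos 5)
Your proposal follows essentially the same route as the paper: the universal family $p:\Cg_{1,1}\to\Me$ gives the functorial correspondence upgrading to an isomorphism of stacks, the weighted projective description $\Me\cong\Pc(4,6)$ calibrates $\deg(c\circ\varphi_f)=12\deg\varphi_f$ so that $12n$ nodal points corresponds to $\deg\varphi_f=n$, and Olsson's Hom--stack theorem gives algebraicity. The one refinement in the paper is that the Deligne--Mumford property of the mapping stack is deduced from the \emph{tameness} of $\Me$ in characteristic $\neq 2,3$ (via Abramovich--Olsson--Vistoli), rather than merely from $\Me$ being Deligne--Mumford, which is the technically correct input to Olsson's result in positive characteristic.
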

    
    \begin{proof}
        By the definition of the universal family $p$, any stable elliptic fibration $f: Y \rightarrow \Pb^1$ comes from a morphism $\varphi_f:\Pb^1 \rightarrow \Me$ and vice versa. As this correspondence also works in families, we can formulate the moduli of stable elliptic fibrations as $\mathrm{Hom}(\Pb^1,\Me)$. Observe that $\Me \cong \Pc(4,6)$ and its coarse map is $c: \Me \rightarrow \overline{M}_{1,1} \cong \Pb^1$, so that $c$ can be identified with $c: \Pc(4,6) \rightarrow \Pb^1$ where $c(x,y)=[x^3: y^2] \in \Pb^1$ for any $(x,y) \in \Pc(4,6) \cong [(\Ab_{x,y}^2\setminus 0)/\Gb_m]$. Since each coordinate function of $\Pb^1$ lifts to degree 12 functions on $\Pc(4,6)$, we conclude that $c^*\Oc_{\Pb^1}(1) \cong \Oc_{\Pc(4,6)}(12)$. This implies that $\deg (c \circ \varphi_f)=12 \cdot \deg \varphi_f$ where $\deg \varphi_f:=\deg \varphi_f^*\Oc_{\Pc(4,6)}(1)$. Note that the discriminant divisor $\Delta$ of $f$ can be recovered by pulling back $\infty \in \Pb^1$ via $c \circ \varphi_f$.
        
        Above discussion shows that $\Lc_{1,12n} \cong \mathrm{Hom}_n(\Pb^{1},\Me)$. As $\Me$ is Deligne--Mumford, the Hom stack $\mathrm{Hom}(\Pb^{1}, \Me)$ is Deligne--Mumford by \cite{Olsson}. And since $\deg f^*\Oc_{\Pc(4,6)}(1)=n$ is an open condition, $\mathrm{Hom}_n(\Pb^{1}, \Me)$ is an open substack of $\mathrm{Hom}(\Pb^{1}, \Me)$.
    \end{proof}
    
    \begin{rmk}\label{rmk:homstack_DM}
        Analogous proof as above shows that for any $a,b \in \N$ and $\mathrm{char}(K)$ not dividing $a$ or $b$, the stack $\mathrm{Hom}_n(\Pb^1,\Pc(a,b))$ parameterizing morphisms $f:\Pb^1 \rightarrow \Pc(a,b)$ with $f^*\Oc_{\Pc(a,b)}(1) \cong \Oc_{\Pb^1}(n)$ is Deligne--Mumford as well.
    \end{rmk}
    
    \par Above proposition shows that $\Lc_{1,12n}$ is a well-behaving object parametrizing stable elliptic fibrations with discriminant degree $12n$. The proposition below signifies the importance of this stack in regard to understanding the moduli of semistable elliptic surfaces:
    
    \begin{prop}\label{Semistable_Bijection}
        Fix any field $K$ of characteristic $\neq 2,3$. Then there is a canonical equivalence of groupoids between $\Lc_{1,12n}(K)$ and the groupoid of semistable elliptic surfaces over $K$ with discriminant degree $12n$.
    \end{prop}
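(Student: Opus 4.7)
The plan is to construct explicit quasi-inverse functors between the two groupoids, leveraging the diagram \eqref{Master} together with Proposition \ref{RDPofNecklace}. Both directions are essentially the two halves of the top row of that diagram.

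In one direction, given a nonsingular semistable elliptic surface $(X, f, S)$ over $K$ with discriminant degree $12n$, perform the stable reduction by contracting every component of each fiber of $f$ that does not meet $S$. This classical birational construction produces a stable elliptic fibration $(Y, g, S')$ to which Proposition \ref{Lef_Moduli_Space} associates a morphism $\varphi_f \in \mathrm{Hom}_n(\Pb^{1}, \Me)$. The degree is forced to be $n$ by the identity $\deg(c \circ \varphi_f) = 12 \deg \varphi_f$ noted above together with $\deg \Delta(X) = 12n$.

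In the other direction, given $\varphi \in \mathrm{Hom}_n(\Pb^{1}, \Me)(K)$, pull back the universal stable elliptic curve $p: \Cg_{1,1} \to \Me$ to form $Y := \varphi^* \Cg_{1,1}$ together with the pulled-back section $S'$. By Proposition \ref{RDPofNecklace}, the only singularities of $Y$ are Du Val singularities of type $A_{k-1}$ sitting at nodes that lie above ramification points of $\varphi$ over $[\infty] \in \Me$, and $S'$ avoids these nodes since the universal section of $p$ lands in the smooth locus of every stable elliptic curve. Taking the minimal resolution $\nu: X \to Y$ replaces each $A_{k-1}$ singularity by a chain of $(-2)$-curves which, together with the original nodal rational component of that fiber, form an $I_k$ necklace, while the $I_1$ fibers over unramified preimages of $[\infty]$ are unchanged. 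Setting $S := \nu_*^{-1}(S')$ and $f := g \circ \nu$, the triple $(X, f, S)$ is a nonsingular semistable elliptic surface of discriminant degree $12n$.

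For the quasi-inverse property, starting from $(X, f, S)$, contracting and then resolving recovers $X$ because semistability forces the contracted $(-2)$-curves to be precisely those lying over the $A_k$ singularities, which are faithfully reintroduced by minimality of $\nu$. Conversely, starting from $\varphi$, pulling back, resolving, and then contracting recovers $Y$ uniquely, and by the universal property of $\Me$ the resulting classifying map equals $\varphi$. Both constructions are canonical and extend to families, so isomorphisms in one groupoid correspond to $2$--isomorphisms in the other. The main technical obstacle will be tracking the distinguished section through both operations: one must verify that the contraction $X \to Y$ is an isomorphism in a neighborhood of $S$ (so $S$ is literally preserved) and dually that $\nu_*^{-1}(S')$ is a genuine regular section of $f$ avoiding every node. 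Both properties reduce to the fact that the universal section of $p: \Cg_{1,1} \to \Me$ meets each stable elliptic curve at its smooth marked point, which is immediate from the definition of a stable elliptic curve $(E, p)$ with $p$ a smooth point.
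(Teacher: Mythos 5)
Your proposal is correct and follows essentially the same route as the paper: both directions are the stable reduction (contracting fiber components missing the section) and the minimal resolution from diagram~\eqref{Master}, shown to be quasi-inverse. The only point where the paper is more explicit than you is functoriality on morphisms --- it notes that an isomorphism of semistable surfaces sends $(-2)$-curves to $(-2)$-curves and an isomorphism of stable fibrations sends singular points to singular points, hence both constructions are well-defined on isomorphisms --- whereas you summarize this as the constructions being ``canonical''; otherwise the arguments agree.
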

    
    \par Before we start with the proof, let's recall the facts about the relative minimal model program on surfaces which will be useful. Without the loss of generality, assume that $\mathrm{char}(K)>0$, as all the results below follow analogously when $\mathrm{char}(K)=0$ (see \cite[\S 7--8]{Fujino}).
    
    \par Suppose that we are given a pair $(S,D)$ of a projective normal $\overline{K}$-surface $S$ and an effective $\R$-divisor $D$ on $S$. Then, we have the following extension of log canonical singularities of pairs to arbitrary characteristic:
    
    \begin{defn} \cite[Definition 5.1]{Tanaka}\label{def:lc}
        A pair $(S,D)$ is log canonical (lc for short) if
        \begin{enumerate}
            \item the log canonical divisor $K_S+D$ is $\Rb$-Cartier
            \item for any proper birational morphism $\pi:W \rightarrow S$ and the divisor $D_W$ defined by \[K_W+D_W=\pi^*(K_S+D)\] then $D_W \le 1$, i.e. when writing $D_W=\sum_i a_iE_i$ as a sum of distinct irreducible divisors $E_i$, $a_i \le 1$ for every $i$.
        \end{enumerate}
    \end{defn}
    
    \par For instance, if $S$ is smooth and $D$ is a reduced simple normal crossing divisor, then $(S,D)$ is log canonical.
    
    \smallskip
    
    \par Now consider the relative setting, where we have a projective $\overline{K}$-morphism $f:S \rightarrow C$ into a $\overline{K}$-variety $C$. Assume furthermore that $D$ is a $\Qb$-divisor and $S$ is $\Qb$-factorial. If $(S,D)$ is lc such that $K_S+D$ is not $f$-antinef, then we obtain a $f$-minimal model $f':(S',D') \rightarrow C$ by \cite[Theorem 6.5]{Tanaka}, where $(S',D')$ is lc, $S'$ is $\Qb$-factorial, and $K_{S'}+D'$ is $f'$-nef. Since there is a morphism $\phi:S \rightarrow S'$ with $\phi_*(K_S+D)=K_{S'}+D'$ by loc.cit., $\phi_*$ induces an isomorphism of sheaves from $f_*\Oc(M(K_S+D))$ to $f'_*\Oc(M(K_{S'}+D'))$ for any $M \in \N$.
    
    \par Observe that $K_{S'}+D'$ is $f'$-semiample by the relative log abundance theorem \cite[Theorem 6.9]{Tanaka}. Hence, we obtain the $f$-log canonical model $f'':(S'',D'') \rightarrow C$ as a relative proj over the variety $C$ where $(S'',D'')$ is lc,
    \begin{equation}\label{cano-ring}
    S'':=\underline{\mathrm{Proj}}_C\bigoplus_{M \in \N}f'_*\Oc(M(K_{S'}+D')) \cong \underline{\mathrm{Proj}}_C\bigoplus_{M \in \N}f_*\Oc(M(K_S+D))
    \end{equation}
    and $K_{S''}+D''$ is $f''$-ample. Note that $S''$ is the result of contracting irreducible curves that have 0-intersection with $K_{S'}+D'$. Uniqueness of $(S'',D'')$ from $(S,D)$ follows from the above characterization.
    
    \medskip
    
    \par Now we are ready to tackle the proof of Proposition~\ref{Semistable_Bijection}. References to discussions from Definition~\ref{def:lc} to here are not explicitly specified, unless it is very important to point them out.
    
    \begin{proof}[Proof of Proposition~\ref{Semistable_Bijection}]
        First, we need to construct a functor $\mathcal{F}$ from the groupoid of semistable elliptic surfaces of discriminant degree $12n$ over $K$ to $\Lc_{1,12n}(K)$. Choose any semistable elliptic surface $f : X \to \Pb^1$ with a distinguished section $s: \Pb^1 \hookrightarrow X$ such that the discriminant degree is $12n$. Denote this surface as a triple $(X,f,s)$ and its base change over the algebraic closure $\overline{K}$ of $K$ as $(\overline{X},\overline{f},\overline{s})$. Since $X$ and $s(\Pb^1)$ are smooth, so is the pair $(\overline{X},\overline{s}(\Pb^1_{\overline{K}}))$, hence lc. Observe that $(\overline{X},\overline{s}(\Pb^1_{\overline{K}}))$ is itself a $\overline{f}$-log minimal model as the pair is lc and $\omega_{\overline{X}_t}(\overline{s}(t))$ is nef for any $t \in \Pb^1_{\overline{K}}$. Then there is a $\overline{f}$-log canonical model $\overline{g}:(\overline{Y},\overline{s}'(\Pb^1_{\overline{K}})) \rightarrow \Pb^1_{\overline{K}}$. Since $f: (X,s(\Pb^1)) \rightarrow \Pb^1$ is the fixed locus of the $\mathrm{Gal}(\overline{K}/K)$-action on $\overline{f}:(\overline{X},\overline{s}(\Pb^1_{\overline{K}})) \rightarrow \Pb^1_{\overline{K}}$, there is an induced $\mathrm{Gal}(\overline{K}/K)$-action on $\overline{f}_*\Oc(M(K_{\overline{X}}+\overline{s}(\Pb^1_{\overline{K}})))$. Applying this observation to equation~(\ref{cano-ring}), we can denote $(Y,g,s')$ to be the $\mathrm{Gal}(\overline{K}/K)$-fixed locus of $(\overline{Y},\overline{g},\overline{s}')$. 
        
        To see that $(Y,g,s')$ is a stable elliptic fibration, observe that $Y$ comes from contracting all components of the geometric fibers of $f$ having trivial intersection with the divisor $K_X+s(\Pb^1)$. Since $K_X$ is trivial on each fiber, those components must avoid the distinguished section $s$. Hence, $(Y,g,s')$ is a stable elliptic fibration.
        
        Note that the stable elliptic fibration $(Y,g,s')$ is uniquely determined by $\varphi_g \in \Lc_{1,12m}(K)$ for some $m$ by Proposition~\ref{Lef_Moduli_Space} where $12m$ is the discriminant degree of $Y$. Therefore, we need to show that $\mathcal{F}(X,f,s)=(Y,g,s')$ has the discriminant degree $12n$ (i.e. $m=n$).  
        
        To see that $m=n$, it amounts to finding the configuration of singular fibers of $g$ by Remark~\ref{rmk:disc}. Suppose that a fiber of $f$ at a geometric point $t \in \Pb^1$ is of type $I_k$ with $k>0$, i.e. the fiber $X_t$ consists of a necklace of rational curves of length $k$. Then by contracting the components of $X_t$ not containing $s(t)$, we obtain the fiber $Y_t$ of $g$. Since $k-1$ number of components of $X_t$ are contracted into a point $y_t \in Y_t$, $Y$ is singular of type $A_{k-1}$ at $y_t$ by Remark~\ref{rmk:disc}. This implies that $Y_t$ is \'etale locally cut out by an equation $xy=u^k$ near its unique singular point, where $u$ is an \'etale local parameter at $t \in \Pb^1$. Since \'etale locally the coordinate for the universal family $p$ at the node of the singular fiber at $[\infty] \in \Me$ is $xy = s$ with $s$ a parameter at $[\infty] \in \Me$, $\varphi_g : \Pb^{1} \to \Me$ is ramified at $t \in \Pb^1$ of order $k-1$ via $u^{k} = s$. Hence, $m=n$ so that $\varphi_g \in \Lc_{1,12n}(K)$. Therefore, $\mathcal{F}$ is a functor sending a semistable elliptic surface $(X,f,s)$ to its $f$-log canonical model $\varphi_g$. 
        
        \medskip
        
        Recall that $\mathcal{F}$ is an equivalence iff it is essentially surjective and fully faithful. To see that $\mathcal{F}$ is essentially surjective, choose any $\varphi_h \in \Lc_{1,12n}(K)$. Arguments similar to above show that the corresponding surface $(Z,h,v)$ only has the singularities of type $A_k$ appearing over ramification points of $\varphi_h$ over $[\infty] \in \Me$. Note that an $A_k$ singularity of $Z$ corresponds to a singular point of a fiber of $h$, and its ramification order  with respect to $\varphi_h$ is $k$. Hence, $Z$ has a minimal resolution of singularities $\eta: S \rightarrow Z$ (see Remark~\ref{rmk:disc}) inducing a fibration $q:S \rightarrow \Pb^1$ via $h$. For each geometric point $t \in \Pb^1$, the fiber of $q$ at $t$ is of type $I_k$ whenever $Z$ has an $A_{k-1}$-singular point at the singular point of the fiber 
        $Z_t$. Since the image of $v$ avoids singular points of $Z$, it lifts to $v':\Pb^1 \rightarrow S$ avoiding singular points of fibers of $q$. Therefore, $(S,q,v')$ is a semistable elliptic surface with its log canonical model $(Z,h,v)$ (by analogous arguments in the first paragraph of the proof). To see that the discriminant degree of $(S,q,v')$ is $12n$, observe that the fiber of $c \circ \varphi_h$ at $c([\infty]) \in \Pb^1$ is a collection of points $x_1,\dotsc,x_{\mu}$ with multiplicities $k_1,\dotsc,k_{\mu}$ respectively. These points have ramification orders $k_1-1,\dotsc,k_\mu-1$ respectively. Since $\sum k_i=12n$ is the degree of $c \circ \varphi_h$, above construction induces singular fibers of $q$ exactly at $x_i$'s with fiber type $I_{k_i}$'s. Summing over the singular points of each fiber, $(S,q,v')$ indeed has the discriminant degree equal to $12n$. This shows that $\mathcal{F}(S,q,v') \cong \varphi_h$. 
        
        Finally, $\mathcal{F}$ is full, because any isomorphism $H: \varphi_{h_1} \rightarrow \varphi_{h_2}$ lifts to their minimal resolution $\tilde H:(S_1,q_1,v_1') \rightarrow (S_2,q_2,v_2')$. $\mathcal{F}$ is faithful as any two isomorphisms between nonsingular surfaces $(X_i,f_i,s_i)$'s for $i=1,2$ agreeing on their log canonical models agree on an open subset, hence agreeing everywhere by separatedness of $X_i$'s. This proves that $\mathcal{F}$ is an equivalence.
    \end{proof}
    
    \begin{rmk}
        In fact, it is unclear whether the functor $\mathcal{F}$ in the proof above extends to families over arbitrary $K$-scheme $B$. Since the relative log abundance is a conjecture for sufficiently high dimensions, it is unknown whether the log canonical model can be taken in families. 
        
        If we instead assume that the log abundance conjecture holds, then the functor $\mathcal{F}$ extends, giving a map from the moduli functor of semistable elliptic surfaces and the stack $\Lc_{1,12n}$. However, it is still unclear whether $\mathcal{F}$ is essentially surjective, as a simultaneous minimal resolution of the fibers of families over any base $B$ may not exist (normally, a resolution of singularities create exceptional divisors, something that is not desired for the purpose of $\mathcal{F}$).
    \end{rmk}
    
    \par A very important consequence of Proposition~\ref{Semistable_Bijection} is that the weighted point count of $\Lc_{1,12n}$ gives the same number as that of the moduli of semistable elliptic surfaces. Since the former has a concrete description as a Deligne--Mumford stack, we focus on acquiring the arithmetic invariants of $\Lc_{1,12n}$.
    
    \section{Motive/Point count of $\Hom_n(\Pb^1,\Pc(a,b))$ over finite fields}
    \label{sec:count}
    \par In this section, we enumerate the stack $\Hom_n(\Pb^1,\Pc(a,b))$ over finite fields $\Fb_q$ for $q$ prime power with characteristic not dividing $a$ or $b$ by using the Grothendieck ring of stacks and explicit point counts. This is applied to the case $\Me \cong \Pc(4,6)$ to obtain Corollary~\ref{cor:pointcount}. Fix $n>0$.
    
    \smallskip
    
    \par To perform a weighted point count on $\Hom_n(\Pb^1,\Pc(a,b))$, we use the idea of cut-and-paste by Grothendieck:
    
    \begin{defn}\label{defn:Grothringvar}
        \cite[\S 1]{Ekedahl}
        Fix a field $K$. Then the \emph{Grothendieck ring $K_0(\mathrm{Stck}_K)$ of algebraic stacks of finite type over $K$ all of whose stabilizer group schemes are affine}, is a group generated by isomorphism classes of $K$-stacks $[\mathcal X]$ of finite type, modulo relations:
        \begin{itemize}
            \item $[\mathcal X]=[\mathcal Z]+[\mathcal{X}\setminus \mathcal{Z}]$ for $\mathcal Z \subset \mathcal X$ a closed substack,
            \item $[\mathcal E]=[\mathcal{X} \times \Ab^n ]$ for $\mathcal{E}$ a vector bundle of rank $n$ on $\mathcal X$.
        \end{itemize}
        Multiplication on $K_0(\mathrm{Stck}_K)$ is induced by $[\mathcal X][\mathcal Y]:=[\mathcal X\times_K \mathcal Y]$. There is a distinguished element $\Lb:=[\A^1] \in K_0(\mathrm{Stck}_K)$, called the \emph{Lefschetz motive}.
    \end{defn}
    
    \par Note that all stabilizer group schemes of a stack $\mathcal X$ being affine is equivalent to the diagonal morphism $\mathcal X \rightarrow \mathcal X \times_K \mathcal X$ being affine. Since many algebraic stacks can be written locally as a quotient of a scheme by an algebraic group $\Gb_m$, the following lemma is very useful:
    
    \begin{lem}\label{lem:Gm_quot}
        For any $\Gb_m$-torsor $\mathcal X \rightarrow \mathcal Y$ of finite type algebraic stacks, we have $[\mathcal Y]=[\mathcal X][\Gb_m]^{-1}$.
    \end{lem}
    \begin{proof}
        This follows from \cite[Proposition 1.1 iii), 1.4]{Ekedahl} and the definition of $K_0^{\mathrm{Zar}}(\mathrm{Stck}_K)$ in \cite[\S 1]{Ekedahl}.
    \end{proof}
    
    \par Since any finite type algebraic $\Fb_q$-stack $\mathcal X$ admits a smooth cover $Y \rightarrow \mathcal X$ by a $\Fb_q$-scheme of finite type, the set $|\mathcal X(\Fb_q)|$ of isomorphism classes of $\Fb_q$-points is finite as $|Y(\Fb_q)|$ is finite as well. Hence, we can define:
    
    \begin{defn}\label{def:wtcount}
        The weighted point count of $\mathcal X$ is defined as a sum:
        \[
        \#_q(\mathcal X):=\sum_{x \in |\mathcal X(\Fb_q)|}\frac{1}{|\mathrm{Stab}_x(\Fb_q)|}
        \]
    \end{defn}
    
    It is easy to see that when $K=\Fb_q$, the assignment $[X] \mapsto \#_q(X)$ gives a well-defined ring homomorphism $\#_q: K_0(\mathrm{Stck}_{\Fb_q}) \rightarrow \Q$ (c.f. \cite[\S 2]{Ekedahl}). Henceforth, for any operation on the Grothendieck ring, there is a corresponding identity in the weighted point count. For example, in the setup of Lemma~\ref{lem:Gm_quot}, $\#_q(\mathcal{Y})=\#_q(\mathcal{X})(q-1)^{-1}$, where $\#_q(\Gb_m)=q-1$.
    
    Thus, if we can express $[\Hom_n(\Pb^1,\Pc(a,b))]$ as sums and products of classes of other stacks with known weighted point counts (even if the classes themselves do not decompose into polynomials in $\Lb$), then we can deduce $\#_q(\Hom_n(\Pb^1,\Pc(a,b)))$. Therefore, we will extensively use the Grothendieck ring in the proof of Theorem~\ref{thm:motivecount}, then use Proposition~\ref{Semistable_Bijection} and explicit weighted point counts when necessary. Now we are ready to prove Theorem~\ref{thm:motivecount}.
    
    \subsection{Proof of Theorem~\ref{thm:motivecount}}
    \label{subsec:pfmotivecount}
    
    \par By \cite[\S 5.2]{CCFK}, $\mathrm{Hom}_n(\Pb^1,\Pc(a,b))$ is isomorphic to a stack parameterizing line bundles $\Lc \simeq \varphi_f^*\Oc_{\Pc(a,b)}(1)$ of degree $n$ on $\Pb^1$ together with sections $u \in H^0(\Pb^{1},\Lc^{\otimes a})$ and $v \in H^0(\Pb^{1},\Lc^{\otimes b})$ such that the global sections $u, v$ are not simultaneously vanishing at any points of $\Pb^1$. Moreover, such pairs $(u,v)$ and $(u',v')$ are equivalent when there exists $\lambda \in \Gb_m$ so that $u'=\lambda^au$ and $v'=\lambda^bv$. Consider $T \subset H^0(\Oc_{\Pb^1}(an))\oplus H^0(\Oc_{\Pb^1}(bn))\setminus 0$ a $\Gb_m$-equivariant open subset parameterizing pairs $(u,v)$ with no common zero, where $\Gb_m$-action on the vector space is as above. Then, $\mathrm{Hom}_n(\Pb^1,\Pc(a,b))$ is a smooth stack isomorphic to the quotient stack $[T/\Gb_m]$, admitting $T$ as a smooth schematic cover. In particular, if $\mathrm{char}(K)>0$ and does not divide $a$ or $b$, then $\mathrm{Hom}_n(\Pb^1,\Pc(a,b))$ is a tame stack by \cite[Theorem 3.2]{AOV}. By Lemma~\ref{lem:Gm_quot}, it suffices to obtain the Grothendieck class $[T]$ (or $\#_q(T)$), as 
    \begin{equation}\label{eqn:bdle}
    [\mathrm{Hom}_n(\Pb^1,\Pc(a,b))]=[T][\Gb_m]^{-1}
    \end{equation}
    
    \medskip
    
    \par Now fix a chart $\A^1 \hookrightarrow \Pb^1$ with $x \mapsto [1:x]$, and call $0=[1:0]$ and $\infty=[0:1]$. It comes from a homogeneous chart of $\Pb^1$ by $[Y:X]$ with $x:=X/Y$ away from $\infty$. Then for any $(u,v) \in T$, $u$ and $v$ are homogeneous polynomials in $X$ and $Y$ with degrees $an$ and $bn$ respectively. By plugging in $Y=1$, we obtain representations of $u$ and $v$ as polynomials in $x$ with degrees at most $an$ and $bn$ respectively. For instance, $\deg u < an$ as a polynomial in $x$ if and only if $u(X,Y)$ is divisible by $Y$, i.e. $u$ vanishes at $\infty$. From now on, $\deg P$ means the degree of $P$ as a polynomial in $x$.
    
    \par Denoting $\deg u:=k$ and $\deg v:=l$, then $(u,v) \in T$ is whenever $k=an$ or $l=bn$ (so that they do not simultaneously vanish at $\infty$) and $u,v$ have no common roots. Since there are many possible degrees for a pair $(u,v) \in T$, consider locally closed subsets $T_{k,l}:=\{(u,v) \in T : \deg u=k, \; \deg v=l \}$. Notice that $T_{k-1,bn} \subset \overline{T}_{k,bn}$ as for any $(u,v) \in T_{k-1,bn}$, $u(X,Y)$ has a description as $Y^{an-k+1}u'(X,Y)$ which is $u_{[1:0]}(X,Y)$ from a pencil polynomials $u_{[t_0:t_1]}(X,Y)=Y^{an-k}(t_1Y-t_0X)u'(X,Y)$ where $u_{[1:t_1]} \in T_{k,bn}$. Hence, we obtain the following stratification:
    
    \begin{align*}
    T&=T_{an,bn} \sqcup \left(\bigsqcup_{k=0}^{an-1} T_{k,bn}\right) \sqcup \left(\bigsqcup_{l=0}^{bn-1} T_{an,l} \right)\\
    T&=\overline{T_{an,bn}} \supsetneq \overline{T_{an-1,bn}} \supsetneq \dotsb \supsetneq \overline{T_{0,bn}}=T_{0,bn}\\
    T&=\overline{T_{an,bn}} \supsetneq \overline{T_{an,bn-1}} \supsetneq \dotsb \supsetneq \overline{T_{an,0}}=T_{an,0}
    \end{align*}
    \[\overline{T_{an-k,bn}} \cap \overline{T_{an,bn-l}} =\emptyset ~~\; \forall k,l>0 \]
    Then,
    \begin{equation}
    \label{eqn:sum}
    [T] =[T_{an,bn}]+\sum_{k=0}^{an-1}[T_{k,bn}]+\sum_{l=0}^{bn-1}[T_{an,l}]
    \end{equation}
    
    \par Define 
    \[F_{k,l}:=\{(u,v) \in T_{k,l} :  u,v \text{ are monic} \} \;.\] Then, $F_{k,l} \hookrightarrow T_{k,l}$ is a section of the projection morphism $T_{k,l} \rightarrow F_{k,l}$ (induced by making $(u,v)$ to be a monic pair), which has $\Gb_m \times \Gb_m$--fibers. Hence, $T_{k,l}$ is a $\Gb_m \times \Gb_m$--bundle over $F_{k,l}$, so Lemma~\ref{lem:Gm_quot} implies that
    \begin{equation}\label{eqn:prod}
    [T_{k,l}]=[\Gb_m]^2[F_{k,l}]
    \end{equation}
    
    There is an alternative description of $F_{k,l}$ as below (inspired by \cite{FW}):
    
    \begin{defn}
        Fix a field $K$ with algebraic closure $\overline{K}$. Fix $k,l \ge 0$. Define $\text{Poly}_1^{(k,l)}$ to be the set of pairs $(u,v)$ of monic polynomials in $K[z]$ so that:
        \begin{enumerate}
            \item $\deg u=k$ and $\deg v=l$.
            \item $u$ and $v$ have no common root in $\overline{K}$.
        \end{enumerate}
    \end{defn}
    
    \par Therefore, $F_{k,l} \cong \mathrm{Poly}_1^{(k,l)}$. To finish the proof, it suffices to find descriptions of $[\mathrm{Poly}_1^{(k,l)}]$ and $\#_q(\mathrm{Poly}_1^{(k,l)})$ as polynomials of $\Lb$ and $q$ respectively. Farb and Wolfson \cite{FW} (see \cite{FW2} for corrections to both results and proofs) found such expression when $k=l$ ($\mathrm{Poly}_1^{(k,k)}$ is called $\mathrm{Poly}_1^{k,2}$ in loc. cit.), and we claim that $[\mathrm{Poly}_1^{(k,l)}]$ and $\#_q(\mathrm{Poly}_1^{(k,l)})$ have similar descriptions, as below:
    
    \begin{prop}
        \label{prop:poly}
        Fix $d_1,d_2 \ge 0$. Then, if $\mathrm{char}(K)=0$
        \[ [\mathrm{Poly}_1^{(d_1,d_2)}] =
        \begin{cases}
        \Lb^{d_1+d_2}-\Lb^{d_1+d_2-1}, & \text{ if } d_1,d_2 >0\;, \\
        \Lb^{d_1+d_2}, & \text{ if } d_1=0 \text{ or } d_2=0\;.
        \end{cases}
        \]
        Similarly, for a finite field $\Fb_q$,
        \[ \#_q(\mathrm{Poly}_1^{(d_1,d_2)}) =
        \begin{cases}
        q^{d_1+d_2}-q^{d_1+d_2-1}, & \text{ if } d_1,d_2 >0\;, \\
        q^{d_1+d_2}, & \text{ if } d_1=0 \text{ or } d_2=0\;.
        \end{cases}
        \]
    \end{prop}
    
    \begin{proof}
        The proof for this is analogous to \cite{FW} (see \cite{FW2} for corrections), Theorem 1.2 (1). Here, we only state the differences to their work.
        
        \noindent \textbf{Step 1}: The space of $(u,v)$ monic polynomials of degree $d_1,d_2$ is instead the quotient $\A^{d_1} \times \A^{d_2}/(S_{d_1} \times S_{d_2}) \cong \A^{d_1+d_2}$. We have the same filtration of $\A^{d_1+d_2}$ by $R_{1,k}^{(d_1,d_2)}$, which is the space of $(u,v)$ monic polynomials of degree $d_1,d_2$ respectively for which there exists a monic $h \in K[z]$ with $\deg (h) \ge k$ and monic polynomials $g_i \in K[z]$ so that $u=g_1h$ and $v=g_2h$. The rest of the arguments follow analogously, keeping in mind that the group action is via $S_{d_1} \times S_{d_2}$.

        \noindent \textbf{Step 2}: Fix $k \ge 0$. Consider the morphism \[\overline{\Psi}:\Ab^{(d_1-k)+(d_2-k)} \times \Ab^k \rightarrow \Ab^{d_1+d_2}\]
        where $\overline{\Psi}(f_1,f_2,g)=(f_1g,f_2g)$. This restricts to a morphism \[\Psi: \mathrm{Poly}_1^{(d_1-k,d_2-k)} \times \Ab^k \rightarrow R_{1,k}^{(d_1,d_2)}\setminus R_{1,k+1}^{(d_1,d_2)}\]
        The rest of the arguments follow analogously from \cite{FW2}.

        \noindent \textbf{Step 3}: By combining Step 1 and 2 as in \cite{FW} and \cite{FW2}, if $\mathrm{char}(K)=0$, we obtain
        \[ [\mathrm{Poly}_1^{(d_1,d_2)}]=\Lb^{d_1+d_2}-\sum_{k \ge 1}[\mathrm{Poly}_1^{(d_1-k,d_2-k)}]\Lb^k \]
        
        \par For the induction on the class $[\mathrm{Poly}_1^{(d_1,d_2)}]$, we use lexicographic induction on the pair $(d_1,d_2)$. Since the order of $d_1,d_2$ does not matter for Grothendieck class, we assume that $d_1 \ge d_2$. For the base cases, consider when $d_2=0$. Then the monic polynomial of degree 0 is nowhere vanishing, so that any polynomial of degree $d_1$ constitutes a member of $\mathrm{Poly}_1^{(d_1,0)}$, so that $\mathrm{Poly}_1^{(d_1,0)} \cong \Lb^{d_1}$. Since this argument is independent of the characteristic, $\#_q(\mathrm{Poly}_1^{(d_1,0)})=q^{d_1}$. Similarly, $d_1=0$ is taken care of. Then for $d_1,d_2 >0$, we obtain:
        \begingroup
        \allowdisplaybreaks
        \begin{align*}
        [\mathrm{Poly}_1^{(d_1,d_2)}]&=\Lb^{(d_1+d_2)}-\sum_{k \ge 1} [\mathrm{Poly}_1^{(d_1-k,d_2-k)}]\Lb^k\\
        &=\Lb^{d_1+d_2}-\left(\sum_{k=1}^{d_2-1}(\Lb^{(d_1-k)+(d_2-k)}-\Lb^{(d_1-k)+(d_2-k)-1})\Lb^k+\Lb^{d_1-d_2}\Lb^{d_2}\right)\\
        &=\Lb^{d_1+d_2}-\left(\sum_{k=1}^{d_2-1}(\Lb^{d_1+d_2-k}-\Lb^{d_1+d_2-k-1})+\Lb^{d_1}\right)\\
        &=\Lb^{d_1+d_2}-\Lb^{d_1+d_2-1}
        \end{align*}
        \endgroup
        The same arguments with $\#_q$ in lieu of $[-]$ gives the point count
        \[ \#_q(\mathrm{Poly}_1^{(d_1,d_2)})=q^{d_1+d_2}-q^{d_1+d_2-1} \]
    \end{proof}
    
    \par Applying the Proposition~\ref{prop:poly} to the equations (\ref{eqn:bdle}), (\ref{eqn:sum}) and (\ref{eqn:prod}), we finally get:
    \begingroup
    \allowdisplaybreaks
    \begin{align*}
    &[\mathrm{Hom}_n(\Pb^1,\Pc(a,b))]\\
    &=[\Gb_m]^{-1}[T]\\
    &=[\Gb_m]^{-1}\left([T_{an,bn}]+\sum_{k=0}^{an-1}[T_{k,bn}]+\sum_{l=0}^{bn-1}[T_{an,l}]\right)\\
    &=[\Gb_m]^{-1}[\Gb_m]^2\left([F_{(an,bn)}]+\sum_{k=0}^{an-1}[F_{(k,bn)}]+\sum_{l=0}^{bn-1}[F_{(an,l)}]\right)\\
    &=[\Gb_m]\left([\mathrm{Poly}_1^{(an,bn)}]+\sum_{k=0}^{an-1}[\mathrm{Poly}_1^{(k,bn)}]+\sum_{l=0}^{bn-1}[\mathrm{Poly}_1^{(an,l)}]\right)\\
    &=(\Lb-1)\left((\Lb^{(a+b)n}-\Lb^{(a+b)n-1})+\Lb^{bn}+\sum_{k =1}^{an-1}(\Lb^{bn+k}-\Lb^{bn+k-1})\right)\\
    &\phantom{=}\text{ }+(\Lb-1)\left(\Lb^{an}+\sum_{l=1}^{bn-1}(\Lb^{an+l}-\Lb^{an+l-1})\right)\\
    &=(\Lb-1)(\Lb^{(a+b)n}-\Lb^{(a+b)n-1}+\Lb^{bn}+\Lb^{(a+b)n-1}-\Lb^{bn}\\
    &\phantom{=(\Lb-1)(}+\Lb^{an}+\Lb^{(a+b)n-1}-\Lb^{an})\\
    &=\Lb^{(a+b)n+1}-\Lb^{(a+b)n-1}
    \end{align*}
    \endgroup
    and similarly for $\#_q$, \[ \#_q(\mathrm{Hom}_n(\Pb^1,\Pc(a,b)))=q^{(a+b)n+1}-q^{(a+b)n-1} \; . \]
    
    \par This finishes the proof of Theorem~\ref{thm:motivecount}.
    
    \begin{rmk}\label{rmk:ptcount}
        Fix $n>0$. Since any $\varphi_g \in \mathrm{Hom}_n(\Pb^1,\Pc(a,b))$ is surjective, the generic stabilizer group $\mu_{\gcd(a,b)}$ of $\Pc(a,b)$ is the automorphism group of $\varphi_g$. Then the Definition~\ref{def:wtcount} and Corollary~\ref{cor:pointcount} implies that the number of $\Fb_q$-isomorphism classes of semistable elliptic surfaces of discriminant degree $12n$ over $\Fb_q$ is  
        \[|\Lc_{1, 12n}(\Fb_q)|=2 \cdot (q^{10n+1}-q^{10n-1})\;\] 
        where the factor of 2 comes from the hyperelliptic involution.
    \end{rmk}

    \section{Counting semistable elliptic curves over global fields by $\Delta$}\label{sec:globfield}
    
    \par In this section, we consider $\Zc_{\Fb_q(t)}(\Bc)$ the counting function of semistable elliptic surfaces (Definition~\ref{def:ellsurf}) with $12n$ nodal singular fibers and a distinguished section. We explicitly compute $\Zc_{\Fb_q(t)}(\Bc)$ by the arithmetic invariant $|\Lc_{1, 12n}(\Fb_q)|$ in the function field setting. An analogous object in the number field setting is $\Zc_{\Qb}(\Bc)$ which is the counting of semistable elliptic curves over $\Qb$. In the end, we formulate a heuristic that for both of the global fields the countings $\Zc_{K}(\Bc)$ will match with one another.
    
    \par As the generic point of $\Pb^{1}_{\Fb_q}$ (the base of semistable elliptic fibrations) is indeed Spec of a rational function field of one variable $t$ over $\Fb_q$, one could think of a semistable elliptic surface $X$ over $\Pb^1$ as the choice of a model for semistable elliptic curves $E$ over $K = \Fb_q(t)$ or equivalently over $\Oc_K = \Fb_q[t]$ by clearing the denominators. On the number field, the analogy would be the semistable elliptic curves $E$ with the squarefree conductor $\Nc = p_1 \cdots \cdots p_{\mu}$ over $\Qb$ or equivalently over $\Oc_K = \Zb$ as relative curves over a Dedekind scheme by the minimal integral Weierstrass model of an elliptic curve. In order to draw the analogy, we need to fix an affine chart $\A^1_{\Fb_q} \subset \Pb^1_{\Fb_q}$ and its corresponding ring of functions $\Fb_q[t]$, since $\Fb_q[t]$ could come from any affine chart of $\Pb^1_{\Fb_q}$, whereas the ring of integers for the number field $K$ is canonically determined. We denote $\infty \in \Pb^1_{\Fb_q}$ to be the unique point not in the chosen affine chart.

    \par Note that for a maximal ideal $\pf$ in $\Oc_K$, the residue field $\Oc_K / \pf$ is finite for both of our global fields. One could think of $\pf$ as a point in $\Spec~\Oc_K$ and define the \textit{height} of a point $\pf$.
    
    \begin{defn}
        Define the height of a point $\pf$ to be $ht(\pf) := |\Oc_K / \pf|$ the cardinality of the residue field $\Oc_K / \pf$. 
    \end{defn}

    \par For simplicity, assume that $X$ does not have a singular fiber over $\infty \in \Pb^1_{\Fb_q}$. Note that the primes $\pf$ of bad reductions are precisely points of the discriminant divisor $\Delta$, as the fiber $X_{\pf}$ is singular over $\Delta$. When $K = \Fb_q(t)$ the function field, we have $\Delta(X) \in H^0(\Pb^{1}, \Oc(12n))$. It has the following factorization for pairwise distinct maximal ideals $\pf_i \subset \Fb_q[t]$ and $\alpha \in \Fb^{*}_q$ over the affine chart:
    \[\Delta(X) = -16(4a_4^3+27a_6^2) = \alpha \prod\limits_{i=1}^{\mu} \pf_i^{k_i}\]
    
    \par There are two ways in which the bad reductions can occur: $E$ can become nodal which is called a multiplicative reduction at $\pf$ or $E$ can become cuspidal which is called an additive reduction at $\pf$. For our consideration, we only have multiplicative reductions as possible bad reductions since semistable elliptic fibrations contain only singular fibers of type $I_k$ for $k\geq 1$. Similar to Remark~\ref{rmk:disc}, a given semistable elliptic fibration over the number field $K$ has $12n$ nodal points distributed over $\mu$ distinct singular fibers that are $I_{k_1},~ \cdots ,~I_{k_i},~ \cdots, I_{k_\mu}$ with $\sum\limits_{i=1}^{\mu} k_i = 12n$.
    
    \par As the discriminant divisor $\Delta(X)$ is an invariant of the choice of semistable model $f : X \to \Pb^1$, we count the number of isomorphism classes of nonsingular semistable elliptic fibrations on the function field $\Fb_q(t)$ by the bounded height of $\Delta(X)$:   
    \[ht(\Delta(X)) = \prod\limits_{i=1}^{\mu} |\Fb_q|^{k_i} = q^{k_1} \cdots q^{k_i} \cdots q^{k_\mu} = q^{k_1 + \cdots + k_{\mu}} = q^{12n}\]
    In general, the height of a discriminant $\Delta(X)$ of any $X$ (without nonsingular fiber assumption over $\infty$) is defined as $q^{12n}$ where $Deg(\Delta(X))=12n$. 
    
    \medskip

    \par We now define $\Zc_{\Fb_q(t)}(\Bc)$ and compute it by the arithmetic invariant $|\Lc_{1, 12n}(\Fb_q)|$ which is equivalent to the counting of the semistable elliptic surfaces over $\Fb_q$ by the bounded height of discriminant $\Delta(X)$.
    \[\Zc_{\Fb_q(t)}(\Bc) := |\{\text{Semistable elliptic curves over } \Pb^{1}_{\Fb_q} \text{ with } 0<ht(\Delta(X)) \le \Bc\}|\]
    \begin{thm} [Computation of $\Zc_{\Fb_q(t)}(\Bc)$] 
        The counting of semistable elliptic curves over $\Pb^1_{\Fb_q}$ by $ht(\Delta(X)) = q^{12n} \le \Bc$ satisfies the following inequality:
        \[ \Zc_{\Fb_q(t)}(\Bc) \le 2 \cdot \frac{(q^{11} - q^{9})}{(q^{10}-1)} \cdot \left( \Bc^{\frac{5}{6}} - 1\right)\]
        which is an equality when $\Bc = q^{12n}$ for some $n \in \N$ implying that the acquired upper bound is a sharp asymptotic with the lower order term of zeroth order (i.e. constant).
    \end{thm}

    \begin{proof}
        
        \par Knowing the number of $\Fb_q$-isomorphism classes of semistable elliptic surfaces of discriminant degree $12n$ over $\Fb_q$ is $|\Lc_{1, 12n}(\Fb_q)|= 2 \cdot (q^{10n+1}-q^{10n-1})$ by Remark~\ref{rmk:ptcount}, we can explicitly compute the bounds for $\Zc_{\Fb_q(t)}(\Bc)$ as the following,
        
        \begingroup
        \allowdisplaybreaks
        \begin{equation}
        \begin{split}
        \Zc_{\Fb_q(t)}(\Bc) & = \sum \limits_{n=1}^{\left \lfloor \frac{log_q \Bc}{12} \right \rfloor} |\Lc_{1, 12n}(\Fb_q)| = \sum \limits_{n=1}^{\left \lfloor \frac{log_q \Bc}{12} \right \rfloor} 2 \cdot (q^{10n + 1} - q^{10n - 1}) \\
        & = 2 \cdot ({q^{1} - q^{-1}}) \sum \limits_{n=1}^{\left\lfloor\frac{log_q \Bc}{12}\right\rfloor} q^{10n} \le 2 \cdot ({q^{1} - q^{-1}}) \left( q^{10} + \cdots + q^{10 \cdot (\frac{log_q \Bc}{12})}\right) \\
        & = 2 \cdot ({q^{1} - q^{-1}}) \frac{q^{10} ( \Bc^{\frac{5}{6}} - 1) }{(q^{10}-1)} = 2 \cdot 
        \frac{(q^{11} - q^{9})}{(q^{10}-1)} \cdot ( \Bc^{\frac{5}{6}} - 1)
        \end{split}
        \end{equation}
        \endgroup
        
        On the second line of the equations above, inequality becomes an equality if and only if $n:= \frac{log_q \Bc}{12} \in \N$, i.e. $\Bc=q^{12n}$ for some $n \in \N$. This implies that the acquired upper bound on $\Zc_{\Fb_q(t)}(\Bc)$ is a sharp asymptotic of order $\Oc\left(\Bc^{\frac{5}{6}}\right)$ with the lower order term of zeroth order.
    \end{proof}
    
    \medskip
    
    \par Switching to the number field realm with $K = \Qb$ and $\Oc_K = \Zb$, one could choose the minimal integral Weierstrass model of an elliptic curve with the given discriminant divisor $\Delta$ which is already a number. 
    
    \par In order to match the counting with the function field, we define the $ht(\Delta)$ to be the cardinality of ring of functions on subscheme $\Spec(\Zb/(\Delta))$. This leads to the following analogue of $\Zc_{K}(\Bc)$ over $\Qb$ which is $\Zc_{\Qb}(\Bc)$.
    
    \medskip
    
    \noindent \[\Zc_{\Qb}(\Bc) = |\{\text{Semistable elliptic curves } E \text{ over } \Spec~\Zb \text{ with } 0<ht(\Delta) \le \Bc ~\}|\]
    
    \begin{conj} [Heuristic on $\Zc_{\Qb}(\Bc)$]
        The counting $\Zc_{\Qb}(\Bc)$ of semistable elliptic curves over $\Zb$ by $ht(\Delta) \le \Bc$ follows from the sharp asymptotic counting on $\Zc_{\Fb_q(t)}(\Bc)$ through the global fields analogy. Namely, $\Zc_{\Qb}(\Bc)$ has the leading term of order $\Oc\left(\Bc^{\frac{5}{6}}\right)$ and the lower order term of zeroth order (i.e. constant).
    \end{conj}
    
    \medskip
    
    \par The heuristic estimate of all elliptic curves over $\Qb$ by the bounded height of $\Delta$ was known to have the order of $\Oc\left(\Bc^{\frac{5}{6}}\right)$ by the work of \cite{BM}. Moreover, the counting of stable elliptic curves with squarefree $\Delta$ has been done in the past over $\Qb$ by the work of \cite{Baier} where the leading term has the order of $\Oc\left(\Bc^{\frac{5}{6}}\right)$ and the error term has the order of $\Oc\left(\Bc^{(7-\frac{5}{27}+\epsilon)/12}\right)$. It would be interesting if one could actually show the lower order term of $\Zc_{\Qb}$ for the number of semistable elliptic curves with non-squarefree $\Delta$ over number field $\Qb$ to be of zeroth order as shown here by the sharp asymptotic counting of $\Zc_{\Fb_q(t)}$ over (global) function fields $\Fb_q(t)$ when $\text{char} (\Fb_q) \neq 2,3$.

\end{document}